\documentclass[12pt,a4paper]{amsart}
\usepackage{graphicx}
\usepackage{latexsym}
\usepackage{amsxtra}
\usepackage{amsmath}
\usepackage{amsfonts}
\usepackage{amssymb}
\usepackage[figuresright]{rotating}

\newtheorem{theorem}{Theorem}[section]
\newtheorem{corollary}[theorem]{Corollary}

\newtheorem{proposition}[theorem]{Proposition}

\setlength{\textheight}{22.3cm}
\addtolength{\hoffset}{-1,5cm}
\addtolength{\textwidth}{3cm}
\addtolength{\voffset}{-1,5cm}
\addtolength{\textheight}{1cm}

\title{New Eulerian numbers of type $D$}
\author{Anna Borowiec}
\author{Wojciech M{\l}otkowski}
\thanks{
W.~M. is supported by the Polish
National Science Center grant No. 2012/05/B/ST1/00626.}
\address{Instytut Matematyczny,
Uniwersytet Wroc{\l}awski,
Plac~Grunwaldzki~2/4,
50-384 Wroc{\l}aw, Poland}
\email{anna.malgorzata.borowiec@gmail.com}
\email{mlotkow@math.uni.wroc.pl}
\subjclass[2010]{Primary 05A05; Secondary  20B35}
\keywords{Eulerian numbers, signed permutations, moments of a probability measure.}

\begin{document}

\begin{abstract}
We introduce a new array of type $D$ Eulerian numbers,
different from that studied by Brenti, Chow and Hyatt.
We find in particular the recurrence relation, Worpitzky formula
and the generating function. We also find the probability distributions
whose moments are Eulerian polynomials of type $A$, $B$ and $D$.
\end{abstract}

\maketitle


\section{Introduction}

For a sequence $(a_1,a_2,\ldots,a_s)$, $a_i\in\mathbb{R}$, the \textit{number of descends}
is defined as the cardinality of the set
\[
\big\{i\in\{1,2,\ldots,s-1\}:a_i>a_{i+1}\big\}.
\]

The aim of this paper is to present a new array of type $D$ Eulerian numbers,
different from that studied by Brenti \cite{brenti}, Chow \cite{chow} and Hyatt \cite{hyatt}.
They define $D_{n,k}$ as the number of elements $\sigma$ in the group $\mathcal{D}_n$
(see Section~4 for the definition) such that the sequence
$(-\sigma(2),\sigma(1),\sigma(2),\ldots,\sigma(n))$ has $k$ descends, see entry $A066094$ in OEIS~\cite{oeis}.
For $0\le n\le 4$ these numbers look as follows:
\[
 \begin{array}{cccccccccc}
   		&& & & 1 & & & \\
   	  && & 1 &  & 1 & & \\
      && 1 & & 2 & & 1 & \\
   & 1 & & 11 & & 11 & & 1\\
1 && 44 && 102 && 44 && 1. &
   \end{array}
\]

In this paper we define type $D$ Eulerian numbers $D(n,k)$ by counting descends in the sequence
$(0,\sigma(1),\sigma(2),\ldots,\sigma(n))$, like for type $B$.
First we find the basic recurrence relations,
which involve the numbers $D(n,k)$ together with $\widetilde{D}(n,k):=B(n,k)-D(n,k)$
(Theorem~\ref{ddrpropositionrecurrence}). Then we prove a simple relation between
the numbers $D(n,k)$ and $\widetilde{D}(n,k)$ (Proposition~\ref{ddpropdifferecebinomial})
and between $D(n,k)$ and $B(n,k)$ (Corollary~\ref{ddcorollarydb}).
Next we derive new recurrence relations for $D(n,k)$
and $\widetilde{D}(n,k)$ independently (Proposition~\ref{ddpropositionrecurrenceb}).
We also find Worpitzki type formulas and the generating function
for the numbers $D(n,k)$ and $\widetilde{D}(n,k)$.

We also prove that for $t\ge0$ the sequences
$\left\{P^{\mathrm{A}}_n(t)\right\}_{n=0}^{\infty}$,
$\left\{P^{\mathrm{B}}_n(t)\right\}_{n=0}^{\infty}$ and
$\left\{P^{\mathrm{D}}_n(t)\right\}_{n=0}^{\infty}$
are positive definite, where $P^{\mathrm{A}}_n(t)$,
$P^{\mathrm{B}}_n(t)$ and  $P^{\mathrm{D}}_n(t)$ denote the Eulerian
polynomials of type $A$, $B$ and $D$ respectively.
For $A$ and $B$ it was showed by Barry \cite{barry2011,barry2013}
who computed the Hankel transforms.
Here we indicate the corresponding probability distributions
$\mu^{\mathrm{A}}_{t}$, $\mu^{\mathrm{B}}_{t}$ and $\mu^{\mathrm{D}}_{t}$.

\section{Eulerian numbers of type $A$}

The classical Eulerian numbers
were introduced by Euler as a tool for studying infinite sums of the form (\ref{aaeulersummation}).
We define $A(n,k)$ as the number of permutations $\sigma\in \mathcal{S}_{n}$
having $k$ descends in the sequence $(\sigma(1),\ldots,\sigma(n))$.
Here we record $A(n,k)$ for $0\le n\le4$:
\[
 \begin{array}{cccccccccccc}
   		&& & & & 1 & & & & \\
   	  && & & 1 &  & 0 & & &\\
      && & 1 & & 1 & & 0 & &\\
   && 1 & & 4 & & 1 & & 0&\\
& 1 && 11 && 11 && 1 && 0 & &\\
1 && 26 && 66 && 26 && 1 && 0.
   \end{array}
\]

Recall the main properties of $A(n,k)$,
for details we refer to \cite{stanley,gkp}
and to entry A123125 in OEIS.
The numbers $A(n,k)$ admit the following recurrence relation:
\begin{equation}\label{aarecurrence}
A(n, k)=(n-k)A(n-1,k-1)+(k+1)A(n-1,k)
\end{equation}
for $0<k<n$, with the boundary conditions:
$A(n,0)=1$ for $n\geq0$ and $A(n,n)=0$ for $n\geq1$.
They also can be expressed in the following way:
\begin{equation}\label{aaformula}
A(n,k)
=\sum_{j=0}^{k}(-1)^{k-j} \binom{n+1}{k-j} (j+1)^n.
\end{equation}

Let us also mention \textit{Worpitzky formula}: for $n\ge0$, $x\in\mathbb{R}$ we have
\begin{equation}\label{aaworpitzky}
\sum_{k=0}^{n}\binom{x+k}{n}A(n,k)=x^n.
\end{equation}

The Eulerian polynomials are defined by
\begin{equation}
P^{\mathrm{A}}_{n}(t):=\sum_{k=0}^{n}A(n, k)t^k,
\end{equation}
in particular $P^{\mathrm{A}}_{n}(1)=n!$.
Euler himself encountered $P^{\mathrm{A}}_{n}(t)$
in the formula:
\begin{equation}\label{aaeulersummation}
\sum_{j=1}^{\infty}t^j j^n 
=\frac{t\cdot P^{\mathrm{A}}_{n}(t)}{(1-t)^{n+1}},
\end{equation}
which holds for $n\ge0$, $|t|<1$.
For $n\ge1$ we have $A(n,k)=A(n,n-k-1)$, which implies
\begin{equation}\label{aapolysymetry}
t^{n-1} P^{\mathrm{A}}_{n}(1/t)=P^{\mathrm{A}}_{n}(t).
\end{equation}

The exponential generating function is equal to
\begin{equation}
f^{\mathrm{A}}(t,z):=
\sum_{n=0}^{\infty}\frac{P^{\mathrm{A}}_{n}(t)}{n!}z^n
=\frac{t-1}{t-e^{(t-1)z}}.
\end{equation}

Recall that a sequence $\{a_n\}_{n=0}^{\infty}$, $a_n\in\mathbb{R}$, is
said to be \textit{positive definite} if the infinite matrix
$\left(a_{i+j}\right)_{i,j=0}^{\infty}$ is positive definite,
i.e. for every sequence $\{c_i\}_{i=0}^{\infty}$, $c_i\in\mathbb{R}$,
with only finitely many nonzero entries,
we have
\[
\sum_{i,j=0}^{\infty} a_{i+j}c_i c_j\ge0.
\]
This is equivalent to the fact, that
there exists a nonnegative measure $\mu$ on $\mathbb{R}$
such that $a_n$ are \textit{moments} of $\mu$, i.e. $a_n=\int x^n\,d\mu(x)$,
$n=0,1,\ldots$, see \cite{akhiezer} for details.

It is well known, that $\{n!=P^{\mathrm{A}}_{n}(1)\}_{n=0}^{\infty}$
is positive definite as the moment sequence of the gamma distribution
with weight $e^{-x}$ on $[0,+\infty)$.
Barry \cite{barry2011,barry2013} showed that for $t\ge0$ the sequence
$\left\{P_{n}(t)\right\}_{n=0}^{\infty}$ is positive definite
by computing the Hankel transform and Jacobi parameters.
Here we are going to show, that for $0<t\ne1$,
$P_{n}(t)$ are moments of a dilated geometric distribution.

\begin{theorem}\label{aapropmeasure}
If $t\ge0$ then $\left\{P^{\mathrm{A}}_{n}(t)\right\}_{n=0}^{\infty}$
is the moment sequence of probability measure $\mu^{\mathrm{A}}_{t}$
given by: $\mu^{\mathrm{A}}_{0}=\delta_{1}$,
\begin{align*}
\mu^{\mathrm{A}}_{t}&=\sum_{j=1}^{\infty}(1-t)t^{j-1}\delta_{j(1-t)}\\
\intertext{if $0<t<1$,}
\mu^{\mathrm{A}}_{1}&=
e^{-x}\chi_{[0,+\infty)}(x)\,dx,\\
\intertext{and for $t>1$}
\mu^{\mathrm{A}}_{t}&=\sum_{j=0}^{\infty}\frac{t-1}{t^{j+1}}\delta_{j(t-1)}.
\end{align*}
\end{theorem}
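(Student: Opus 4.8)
The strategy is to verify directly that the $n$-th moment of each proposed measure equals $P^A_n(t)$, using the known generating-function identities rather than anything combinatorial. The three cases $t=0$, $0<t<1$, $t>1$ are handled by the same computation, with the degenerate case $t=1$ obtained as a limit (or checked separately against the gamma distribution).

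Key steps:
1. For $0<t<1$: compute $\int x^n\,d\mu^A_t(x) = \sum_{j=1}^\infty (1-t)t^{j-1}(j(1-t))^n = (1-t)^{n+1}\sum_{j=1}^\infty t^{j-1}j^n$. Then invoke Euler's summation formula (\ref{aaeulersummation}), which gives $\sum_{j=1}^\infty t^j j^n = tP^A_n(t)/(1-t)^{n+1}$, hence $\sum_{j=1}^\infty t^{j-1}j^n = P^A_n(t)/(1-t)^{n+1}$. Multiplying by $(1-t)^{n+1}$ yields exactly $P^A_n(t)$. Also check that $\mu^A_t$ is a probability measure: total mass is $(1-t)\sum_{j\ge1}t^{j-1}=1$, and all atoms sit at nonnegative reals since $1-t>0$.

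2. For $t>1$: similarly $\int x^n\,d\mu^A_t(x) = \sum_{j=0}^\infty \frac{t-1}{t^{j+1}}(j(t-1))^n = (t-1)^{n+1}\sum_{j=0}^\infty j^n t^{-j-1}$. Apply Euler's formula with $t$ replaced by $1/t$ (valid since $|1/t|<1$): $\sum_{j=1}^\infty (1/t)^j j^n = (1/t)P^A_n(1/t)/(1-1/t)^{n+1}$. Rearranging, $\sum_{j=0}^\infty j^n t^{-j-1} = t^{-1}\cdot t^{-1}P^A_n(1/t)\cdot t^{n+1}/(t-1)^{n+1} = t^{n-1}P^A_n(1/t)/(t-1)^{n+1}$. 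By the symmetry relation (\ref{aapolysymetry}), $t^{n-1}P^A_n(1/t)=P^A_n(t)$, so the moment is $(t-1)^{n+1}\cdot P^A_n(t)/(t-1)^{n+1}=P^A_n(t)$. Probability: total mass $(t-1)\sum_{j\ge0}t^{-j-1}=(t-1)\cdot\frac{1/t}{1-1/t}=1$; atoms at nonnegative reals.

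3. For $t=0$: $P^A_n(0)=A(n,0)=1$ for all $n$, which are exactly the moments of $\delta_1$.

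4. For $t=1$: $P^A_n(1)=n!=\int_0^\infty x^n e^{-x}\,dx$, the moments of the gamma(1) (exponential) distribution, as recalled in the text. One may also note this is the weak limit of $\mu^A_t$ as $t\to1^{\pm}$ (the atoms become dense with geometrically decaying weights approximating $e^{-x}dx$), but a direct citation suffices.

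The main (and only) obstacle is bookkeeping: getting the exponents of $(1-t)$ and $t$ right, and correctly applying the reflection symmetry (\ref{aapolysymetry}) in the $t>1$ case so that $P^A_n(1/t)$ turns back into $P^A_n(t)$. There is no conceptual difficulty — everything follows from (\ref{aaeulersummation}) and (\ref{aapolysymetry}) — but the $t>1$ computation is the place where a sign or exponent error is most likely, so I would write that one out in full.
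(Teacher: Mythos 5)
Your proposal is correct and follows essentially the same route as the paper: verify the moments case by case via Euler's summation formula (\ref{aaeulersummation}) for $0<t<1$, the same formula combined with the symmetry (\ref{aapolysymetry}) for $t>1$ (the paper writes this with the substitution $s=1/t$, which is only a notational difference), and the gamma integral for $t=1$. Your additional checks of total mass (which also cover the $n=0$ case, where (\ref{aapolysymetry}) does not apply) are a welcome touch of care, but nothing here departs from the paper's argument.
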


\begin{proof}
We have to show that the $n$-th moment of $\mu^{\mathrm{A}}_{t}$ is $P^{\mathrm{A}}_{n}(t)$.
For $0<t<1$ we have
\[
\int_{\mathbb{R}} t^n\,d\mu^{\mathrm{A}}(t)
=\sum_{j=1}^{\infty}(1-t)^{n+1}j^n t^{j-1}
\]
which, in view of (\ref{aaeulersummation}), is equal to $P^{\mathrm{A}}_{n}(t)$.

Assume that $t>1$. Then $\mu^{\mathrm{A}}_{t}(\mathbb{R})=1$ and for $n\ge1$ we substitute $t:=1/s$:
\[
\int_{\mathbb{R}} t^n\,d\mu^{\mathrm{A}}(t)
=\sum_{j=0}^{\infty}\frac{(t-1)^{n+1}j^n}{t^{j+1}}
=\frac{(1-s)^{n+1}}{s^n}\sum_{j=1}^{\infty}j^n s^{j}=\frac{P^{\mathrm{A}}_{n}(s)}{s^{n-1}}
\]
and by (\ref{aapolysymetry}) this is equal to $P^{\mathrm{A}}_{n}(1/s)=P^{\mathrm{A}}_{n}(t)$.

Finally, by the definition of the gamma function we have
\[
\int_{0}^{+\infty}x^{n}e^{-x}\,dx=\Gamma(n+1)=n!=P^{\mathrm{A}}_{n}(1),
\]
which completes the proof.
\end{proof}

\section{Eulerian numbers of type $B$}

We define $\mathcal{B}_n$ as the group of such permutations $\sigma$ of the set
\[
\{-n,\ldots,-1,0,1,\ldots,n\}
\]
that $\sigma$ is odd, i.e.
$\sigma(-k)=-\sigma(k)$ for every $k$,
$-n\le k\le n$.
Then $\left|\mathcal{B}_n\right|=2^n n!$ and $\mathcal{B}_n$ can be naturally viewed as a subgroup
of the symmetric group $\mathcal{S}_{2n}$.
For $\sigma\in\mathcal{B}_{n}$ we denote by $\mathrm{desc}(\sigma)$
the number of descends in the sequence $(0,\sigma(1),\ldots,\sigma(n))$.

Denote by $\mathcal{B}_{n,k}$ the set
$\left\{\sigma\in\mathcal{B}_n:\mathrm{desc}(\sigma)=k\right\}$,
and by $B(n,k)$ its cardinality, $0\le k\le n$.
$B(n,k)$ are called \textit{type $B$ Eulerian numbers}.
Here we record $B(n,k)$ for $0\le n\le 4$:
\[
 \begin{array}{cccccccccc}
   		&& & & 1 & & & \\
   	  && & 1 &  & 1 & & \\
      && 1 & & 6 & & 1 & \\
   & 1 & & 23 & & 23 & & 1\\
1 && 76 && 230 && 76 && 1. & \\
   \end{array}
\]
Now we recall basic properties of $B(n,k)$,
for details we refer to \cite{brenti,chowgessel2007}
and to entry $A060187$ in OEIS.
First of all, they satisfy the recurrence relation:
\begin{equation}\label{bbrecurrence}
B(n,k)=(2n-2k+1)B(n-1,k-1)+(2k+1)B(n-1,k),
\end{equation}
$0<k<n$, with the boundary conditions $B(n,0)=B(n,n)=1$.
Similarly to (\ref{aaformula}) and (\ref{aaworpitzky}) we have equality
\begin{equation}\label{bbformula}
B(n,k)
=\sum_{j=0}^{k}(-1)^{k-j}\binom{n+1}{k-j}(2j+1)^{n},
\end{equation}
$0\le k\le n$, and Worpitzky formula of the form:
\begin{equation}\label{bbworpitzky}
\sum_{k=0}^{n}\binom{x+k}{n}B(n,k)=(1+2x)^n
\end{equation}
for $n\ge0$.

The Eulerian polynomials of type $B$,
\begin{equation}
P^{\mathrm{B}}_{n}(t):=\sum_{k=0}^{n}B(n, k)t^k,
\end{equation}
are related to $P^{\mathrm{A}}_{n}(t)$ in the following way:
\begin{equation}
(1+t)^{n+1} P^{\mathrm{A}}_{n}(t)-2^{n} t P^{\mathrm{A}}_{n}(t^2)=P^{\mathrm{B}}_{n}(t^2).
\end{equation}
The symmetry $B(n,k)=B(n,n-k)$ implies $t^n\cdot P^{\mathrm{B}}_{n}(1/t)=P^{\mathrm{B}}_{n}(t)$.
For $|t|<1$, $n\ge0$, we have odd version of (\ref{aaeulersummation}):
\begin{equation}\label{bbeulersummation}
\sum_{k=0}^{\infty}(2k+1)^n t^k=\frac{P^{\mathrm{B}}_{n}(t)}{(1-t)^{n+1}}.
\end{equation}

The exponential generating function is equal
\begin{equation}\label{bbgeneratingfunction}
f^{\mathrm{B}}(t,z)
:=\sum_{n=0}^{\infty}\frac{P^{\mathrm{B}}_n(t)}{n!}z^n
=\frac{(1-t)e^{(1-t)z}}{1-t e^{2(1-t)z}}.
\end{equation}

Similarly as for type $A$ we can prove that the sequence $\left\{P^{\mathrm{B}}_{n}(t)\right\}_{n=0}^{\infty}$
is positive definite for $t\ge0$ (c.f. \cite{barry2011,barry2013}) and indicate the corresponding probability measures.

\begin{theorem}\label{bbpropositionmeasure}
If $t\ge0$ then $\left\{P^{\mathrm{B}}_{n}(t)\right\}_{n=0}^{\infty}$
is the moment sequence of probability measure $\mu^{\mathrm{B}}_{t}$
given by: $\mu^{\mathrm{B}}_{0}=\delta_{1}$,
\begin{align*}
\mu^{\mathrm{B}}_{t}&=\sum_{k=0}^{\infty}(1-t)t^k\delta_{(2k+1)(1-t)}\\
\intertext{if $0<t<1$,}
\mu^{\mathrm{B}}_{1}&=
\frac{1}{2}e^{-x/2}\chi_{[0,+\infty)}(x)\,dx,\\
\intertext{and for $t>1$}
\mu^{\mathrm{B}}_{t}&=\sum_{k=0}^{\infty}\frac{t-1}{t^{k+1}}\delta_{(2k+1)(t-1)}.
\end{align*}
\end{theorem}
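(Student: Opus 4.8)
The plan is to verify directly that the $n$-th moment of each proposed measure equals $P^{\mathrm{B}}_{n}(t)$, exactly mirroring the three-case structure of the proof of Theorem~\ref{aapropmeasure}, but now invoking the odd summation formula (\ref{bbeulersummation}) in place of (\ref{aaeulersummation}), and the type $B$ symmetry $t^n P^{\mathrm{B}}_{n}(1/t)=P^{\mathrm{B}}_{n}(t)$ in place of (\ref{aapolysymetry}). First I would check that each $\mu^{\mathrm{B}}_{t}$ is a probability measure: for $0<t<1$ the total mass is $\sum_{k=0}^{\infty}(1-t)t^k=1$, and for $t>1$ it is $\sum_{k=0}^{\infty}(t-1)t^{-k-1}=(t-1)\cdot\frac{1/t}{1-1/t}=1$; the cases $t=0$ and $t=1$ are immediate.

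For $0<t<1$, the $n$-th moment is
\[
\int_{\mathbb{R}}x^n\,d\mu^{\mathrm{B}}_{t}(x)
=\sum_{k=0}^{\infty}(1-t)t^k\bigl((2k+1)(1-t)\bigr)^n
=(1-t)^{n+1}\sum_{k=0}^{\infty}(2k+1)^n t^k,
\]
which by (\ref{bbeulersummation}) equals $(1-t)^{n+1}\cdot\frac{P^{\mathrm{B}}_{n}(t)}{(1-t)^{n+1}}=P^{\mathrm{B}}_{n}(t)$. For $t>1$, set $s:=1/t\in(0,1)$; then
\[
\int_{\mathbb{R}}x^n\,d\mu^{\mathrm{B}}_{t}(x)
=\sum_{k=0}^{\infty}\frac{t-1}{t^{k+1}}\bigl((2k+1)(t-1)\bigr)^n
=(t-1)^{n+1}s\sum_{k=0}^{\infty}(2k+1)^n s^k
=(t-1)^{n+1}s\cdot\frac{P^{\mathrm{B}}_{n}(s)}{(1-s)^{n+1}}.
\]
Since $t-1=(1-s)/s$, we get $(t-1)^{n+1}s/(1-s)^{n+1}=s^{-n}$, so the moment equals $s^{-n}P^{\mathrm{B}}_{n}(s)=s^{-n}P^{\mathrm{B}}_{n}(1/t)$; applying the symmetry $s^n P^{\mathrm{B}}_{n}(1/s)=P^{\mathrm{B}}_{n}(s)$, i.e. $s^{-n}P^{\mathrm{B}}_{n}(s)=P^{\mathrm{B}}_{n}(1/s)=P^{\mathrm{B}}_{n}(t)$, finishes this case. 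Finally, for $t=1$ one computes $\int_{0}^{\infty}x^n\cdot\tfrac12 e^{-x/2}\,dx=2^n\Gamma(n+1)=2^n n!$, and since $B(n,k)\big|_{\text{all }k}$ sums with $t=1$ to $P^{\mathrm{B}}_{n}(1)=2^n n!$ (consistent with $\left|\mathcal{B}_n\right|=2^n n!$), this matches; the case $t=0$ gives moments all equal to $1=P^{\mathrm{B}}_{n}(0)=B(n,0)$, matching $\delta_1$.

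I do not anticipate a genuine obstacle here — the argument is a direct transcription of the type $A$ proof — but the one point requiring a little care is the bookkeeping of the substitution $t=1/s$ in the $t>1$ case: one must track the powers of $s$ correctly and make sure the symmetry relation is applied in the right direction, since a sign or exponent slip there is the only place the computation could go wrong. A secondary minor point is justifying the interchange of summation and the (finite, polynomial) integrand, which is automatic since all series have nonnegative terms and converge for $|t|<1$ (resp. $|s|<1$). Positive definiteness of $\left\{P^{\mathrm{B}}_{n}(t)\right\}_{n=0}^{\infty}$ then follows for all $t\ge0$ because each $\mu^{\mathrm{B}}_{t}$ is a genuine nonnegative (probability) measure on $\mathbb{R}$, which by the classical Hamburger criterion cited in the excerpt is equivalent to positive definiteness of the moment sequence.
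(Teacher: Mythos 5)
Your proposal is correct and follows essentially the same route as the paper's proof: the $0<t<1$ case via (\ref{bbeulersummation}), the $t>1$ case via the substitution $s=1/t$ together with the symmetry $s^{n}P^{\mathrm{B}}_{n}(1/s)=P^{\mathrm{B}}_{n}(s)$, and the $t=1$ case via the gamma integral $\tfrac12\int_0^\infty x^n e^{-x/2}\,dx=2^n n!$. The extra checks of total mass and of the $t=0$ case are harmless additions the paper leaves implicit.
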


\begin{proof}
The case $0<t<1$ is a consequence of (\ref{bbeulersummation}).
For $t>1$ we substitute $t:=1/s$ and have
\[
\int_{\mathbb{R}} t^n\,d\mu^{\mathrm{B}}(t)
=\sum_{k=0}^{\infty}\frac{(t-1)^{n+1}(2k+1)^n}{t^{k+1}}
\]
\[
=\frac{(1-s)^{n+1}}{s^n}\sum_{j=0}^{\infty}(2k+1)^n s^{k}=\frac{P^{\mathrm{B}}_{n}(s)}{s^{n}}
=P^{\mathrm{B}}_{n}(1/s)=P^{\mathrm{B}}_{n}(t).
\]
Finally
\[
\frac{1}{2}\int_{0}^{+\infty}x^{n}e^{-x/2}\,dx=2^n n!=P^{\mathrm{B}}_{n}(1).
\]
\end{proof}

\section{Eulerian numbers of type $D$}

By $\mathcal{D}_n$ we will denote the group of such elements $\sigma\in\mathcal{B}_n$
that the set $\{\sigma(1),\ldots,\sigma(n)\}$ contains even
number of negative terms.
If $n\ge1$ then $\mathcal{D}_n$ is a normal subgroup
of $\mathcal{B}_n$ of index~2. Denote $\widetilde{\mathcal{D}}_n:=\mathcal{B}_n\setminus\mathcal{D}_n$ and
\[
\mathcal{D}_{n,k}:=\left\{\sigma\in\mathcal{D}_n:\mathrm{desc}(\sigma)=k\right\},
\]
\[
\widetilde{\mathcal{D}}_{n,k}
:=\left\{\sigma\in\widetilde{\mathcal{D}}_n:\mathrm{desc}(\sigma)=k\right\},
\]
so that $\mathcal{D}_{n,k}=\mathcal{B}_{n,k}\cap\mathcal{D}_{n}$,
$\widetilde{\mathcal{D}}_{n,k}=\mathcal{B}_{n,k}\setminus\mathcal{D}_{n}$.
Cardinalities of these sets will be denoted $D(n,k)$ and $\widetilde{D}(n,k)$
respectively.
Since $\mathcal{B}_{n,k}=\mathcal{D}_{n,k}\dot{\cup}\widetilde{\mathcal{D}}_{n,k}$,
we have
\begin{equation}\label{ddplusddisbb}
B(n,k)=D(n,k)+\widetilde{D}(n,k).
\end{equation}
Here we record the numbers $D(n,k)$ (the \textit{primary type $D$ triangle}):
\begin{equation}
\nonumber
 \begin{array}{cccccccccc}
   		&& & & 1 & & & \\
   	  && & 1 &  & 0 & & \\
      && 1 & & 2 & & 1 & \\
   & 1 & & 10 & & 13 & & 0\\
1 && 36 && 118 && 36 && 1 &
   \end{array}
\end{equation}
and $\widetilde{D}(n,k)$ (the \textit{complementary type $D$ triangle}):
\begin{equation}
\nonumber
 \begin{array}{cccccccccc}
   		&& & & 0 & & & \\
   	  && & 0 &  & 1 & & \\
      && 0 & & 4 & & 0 & \\
   & 0 & & 13 & & 10 & & 1\\
0 && 40 && 112 && 40 && 0 &
   \end{array}
\end{equation}
 for $0\le n\le 4$.
We conjecture that every row in $D$ and in $\widetilde{D}$ constitutes a unimodal sequence
and that for every $k\ge1$ the sequences
$\{D(n+k,k)\}_{n=0}^{\infty}$, $\{D(n+k,n)\}_{n=0}^{\infty}$,
$\{\widetilde{D}(n+k,k)\}_{n=0}^{\infty}$,  $\{\widetilde{D}(n+k,n)\}_{n=0}^{\infty}$
are increasing.

First we note the following symmetry.

\begin{proposition}
For $0\le k\le n$ we have
\[
D(n,k)=D(n,n-k),\quad
\widetilde{D}(n,k)=\widetilde{D}(n,n-k)
\]
if $n$ is even and
\[
D(n,k)=\widetilde{D}(n,n-k),\quad
\widetilde{D}(n,k)=D(n,n-k)
\]
if $n$ is odd.
\end{proposition}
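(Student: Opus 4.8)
The plan is to construct, for each parity of $n$, an explicit bijection on $\mathcal{B}_n$ that reverses the descent statistic $\mathrm{desc}(\sigma) \mapsto n - \mathrm{desc}(\sigma)$ and whose effect on the parity (number of negative values among $\sigma(1),\dots,\sigma(n)$) is controlled. The natural candidate is the map $\sigma \mapsto \sigma^{*}$ where $\sigma^{*}(i) := -\sigma(i)$ for all $i$, i.e.\ negating every value. First I would check that this is indeed an element of $\mathcal{B}_n$: it is clearly an odd bijection of $\{-n,\dots,n\}$. Applied to the word $(0,\sigma(1),\dots,\sigma(n))$, negation of entries $\sigma(1),\dots,\sigma(n)$ reverses every strict inequality between consecutive entries $\sigma(i), \sigma(i+1)$ for $1 \le i \le n-1$, and it converts the first comparison $0 < \sigma(1)$ versus $0 > \sigma(1)$ as well. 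So among the $n$ adjacent pairs in $(0,\sigma(1),\dots,\sigma(n))$, ascents and descents are interchanged (note there is no $\sigma(i)=\sigma(i+1)$ case since $\sigma$ is injective, and $\sigma(1)\ne 0$), whence $\mathrm{desc}(\sigma^{*}) = n - \mathrm{desc}(\sigma)$. This already gives $B(n,k)=B(n,n-k)$ and will reduce the proposition to tracking parities.

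Next I would determine how $*$ acts on $\mathcal{D}_n$ versus $\widetilde{\mathcal{D}}_n$. If $\sigma$ has $j$ negative values among $\sigma(1),\dots,\sigma(n)$, then $\sigma^{*}$ has $n-j$ such negative values. Hence $\sigma \in \mathcal{D}_n$ (i.e.\ $j$ even) iff $\sigma^{*}$ has $n-j$ negative values; when $n$ is even, $n-j$ is even iff $j$ is even, so $*$ preserves $\mathcal{D}_n$ and maps $\widetilde{\mathcal{D}}_n$ to itself; when $n$ is odd, $n-j$ is even iff $j$ is odd, so $*$ interchanges $\mathcal{D}_n$ and $\widetilde{\mathcal{D}}_n$. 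Since $*$ is an involution, it restricts in the even case to bijections $\mathcal{D}_{n,k}\to\mathcal{D}_{n,n-k}$ and $\widetilde{\mathcal{D}}_{n,k}\to\widetilde{\mathcal{D}}_{n,n-k}$, and in the odd case to bijections $\mathcal{D}_{n,k}\to\widetilde{\mathcal{D}}_{n,n-k}$ and $\widetilde{\mathcal{D}}_{n,k}\to\mathcal{D}_{n,n-k}$. Taking cardinalities gives exactly the four claimed identities.

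The only subtle point — the step I expect to be the main (minor) obstacle — is the careful bookkeeping of the boundary comparison and the edge cases: verifying that negating the values really toggles the status of the pair $(0,\sigma(1))$ together with all the internal pairs, so that the total descent count goes to $n-\mathrm{desc}(\sigma)$ rather than, say, $n-1-\mathrm{desc}(\sigma)$; and confirming that no adjacent pair is ever an equality (which holds because $\sigma$ is a permutation and $0$ is not in the image of $\{1,\dots,n\}$ under $\sigma$ — indeed $\sigma(i)=0$ would force $\sigma(-i)=0$, contradicting injectivity, so $\sigma(1)\ne 0$). Once these are nailed down the argument is a direct parity count; I would present it as a single lemma computing $\mathrm{desc}(\sigma^{*})$ followed by the parity dichotomy above.
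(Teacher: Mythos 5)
Your proposal is correct and follows exactly the paper's own argument: the paper also uses the map $\sigma\mapsto-\sigma$ defined by $(-\sigma)(k):=-\sigma(k)$, observes that it sends $\mathrm{desc}(\sigma)$ to $n-\mathrm{desc}(\sigma)$, and tracks the parity of the number of negated values to decide whether $\mathcal{D}_{n,k}$ maps to $\mathcal{D}_{n,n-k}$ or to $\widetilde{\mathcal{D}}_{n,n-k}$. Your write-up simply spells out the descent-reversal and the edge cases in more detail than the paper does.
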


\begin{proof}
For $\sigma\in\mathcal{B}_n$ define $-\sigma\in\mathcal{B}_n$
by $(-\sigma)(k):=-\sigma(k)$.
It suffices to note that the map $\sigma\mapsto-\sigma$
defines bijections $\mathcal{D}_{n,k}\to\mathcal{D}_{n,n-k}$,
$\widetilde{\mathcal{D}}_{n,k}\to\widetilde{\mathcal{D}}_{n,n-k}$
if $n$ is even and $\mathcal{D}_{n,k}\to\widetilde{\mathcal{D}}_{n,n-k}$,
$\widetilde{\mathcal{D}}_{n,k}\to\mathcal{D}_{n,n-k}$
if $n$ is odd.
\end{proof}

Now we present the fundamental recurrence relations for both triangles.

\begin{theorem}\label{ddrpropositionrecurrence}
The numbers $D(n,k)$, $\widetilde{D}(n,k)$ admit the following recurrence relations:
\[
D(n, k)=(k+1)D(n-1, k)+(n-k)D(n-1, k-1)
\]
\[
+k\widetilde{D}(n-1, k)+(n-k+1)\widetilde{D}(n-1, k-1)
\]
and
\[
\widetilde{D}(n, k)=(k+1)\widetilde{D}(n-1, k)+(n-k)\widetilde{D}(n-1, k-1)
\]
\[
+kD(n-1, k)+(n-k+1)D(n-1, k-1)
\]
for $0<k<n$ and the boundary conditions: $D(n, 0)=1$, $\widetilde{D}(n, 0)=0$ and
\[
D(n,n)=\left\{\begin{array}{ll}
1&\mbox{if $n$ is even,}\\
0&\mbox{if $n$ is odd,}
\end{array}\right.
\qquad
\widetilde{D}(n,n)=\left\{\begin{array}{ll}
0&\mbox{if $n$ is even,}\\
1&\mbox{if $n$ is odd.}
\end{array}\right.
\]
\end{theorem}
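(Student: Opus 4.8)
The plan is to establish the two recurrences by a combinatorial insertion argument, adapting the standard proof of the type $B$ recurrence (\ref{bbrecurrence}) and keeping careful track of the parity of the number of negative entries. Fix $\sigma\in\mathcal{B}_n$ with $0<k<n$ and consider the restriction $\sigma'\in\mathcal{B}_{n-1}$ obtained by deleting from the one-line word $(\sigma(1),\ldots,\sigma(n))$ whichever of $\pm n$ occurs among $\sigma(1),\ldots,\sigma(n)$ (and adjusting the remaining values to lie in $\{-(n-1),\ldots,n-1\}$ by the usual order-preserving relabelling). Conversely, to reconstruct an element of $\mathcal{B}_n$ from $\sigma'\in\mathcal{B}_{n-1}$ we must choose one of the $2n$ slots: insert either $+n$ or $-n$ into one of the $n$ gaps of the word $(0,\sigma'(1),\ldots,\sigma'(n-1))$, reading the word as a cyclic-like sequence with the fixed initial $0$. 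The key point is that inserting $+n$ (the largest value) into a gap either preserves the number of descents or increases it by one, and likewise for $-n$ (the smallest value, which behaves dually); a short case analysis on where the insertion falls relative to an ascent/descent of $(0,\sigma'(1),\ldots,\sigma'(n-1))$ shows that exactly the slots described by the coefficients $(k+1)$, $(n-k)$, $k$, $(n-k+1)$ contribute to $\mathrm{desc}=k$.

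Concretely, I would argue as follows. If $\sigma'$ has $\mathrm{desc}(\sigma')=k$, then among the $n$ gaps for a prospective $+n$: inserting $+n$ at the very end never creates a descent, and inserting it immediately after a descent-top of $(0,\sigma'(1),\ldots,\sigma'(n-1))$ also keeps the count at $k$ while destroying nothing — this gives $k+1$ good slots; the remaining $n-1-k$ gaps raise the count. Dually, $-n$ is smaller than every surviving value and than the leading $0$, so inserting $-n$ contributes like inserting a new global minimum: the careful bookkeeping (the leading $0$ makes the two ends behave asymmetrically, which is exactly why $+n$ gives $k+1$ and $-n$ gives $k$ in the "stay" column) yields $k$ slots that keep $\mathrm{desc}=k$ coming from $\sigma'$ with $\mathrm{desc}=k$. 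Similarly, from $\sigma'$ with $\mathrm{desc}(\sigma')=k-1$ we get $n-k$ good $(+n)$-slots and $n-k+1$ good $(-n)$-slots that raise the count to $k$. The decisive extra ingredient beyond type $B$ is the \emph{parity}: inserting $+n$ does not change the parity of the number of negative entries, whereas inserting $-n$ flips it. Hence the $(+n)$-insertions send $\mathcal{D}_{n-1}\to\mathcal{D}_n$ and $\widetilde{\mathcal{D}}_{n-1}\to\widetilde{\mathcal{D}}_n$, while the $(-n)$-insertions send $\mathcal{D}_{n-1}\to\widetilde{\mathcal{D}}_n$ and $\widetilde{\mathcal{D}}_{n-1}\to\mathcal{D}_n$. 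Combining the count of good slots with the parity tracking gives, for $D(n,k)$, the contributions $(k+1)D(n-1,k)$ and $(n-k)D(n-1,k-1)$ from $+n$-insertions, and $k\widetilde{D}(n-1,k)$ and $(n-k+1)\widetilde{D}(n-1,k-1)$ from $-n$-insertions; the formula for $\widetilde{D}(n,k)$ is obtained by swapping the roles of $D$ and $\widetilde{D}$ on the right-hand side, exactly as the parity analysis dictates.

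Finally the boundary values: $D(n,0)=1$ because the unique $\sigma\in\mathcal{B}_n$ with no descent in $(0,\sigma(1),\ldots,\sigma(n))$ is the identity $\sigma(i)=i$, which has $0$ negative entries and so lies in $\mathcal{D}_n$; hence $\widetilde{D}(n,0)=B(n,0)-D(n,0)=0$. For $\mathrm{desc}=n$ the unique witness is $\sigma(i)=-(n+1-i)$, i.e. the word $(-n,-(n-1),\ldots,-1)$, which has $n$ negative entries; this lies in $\mathcal{D}_n$ iff $n$ is even, giving the stated $D(n,n)$ and, by (\ref{ddplusddisbb}) together with $B(n,n)=1$, the stated $\widetilde{D}(n,n)$. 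The main obstacle is purely the descent bookkeeping in the middle paragraph — getting the split $k+1$ versus $k$ (and $n-k$ versus $n-k+1$) right, since the asymmetry created by the fixed leading $0$ is easy to mishandle; one clean way to make this rigorous is to treat the word $(0,\sigma(1),\ldots,\sigma(n))$ as a linear sequence and count, for each of the two inserted values, exactly how many of the $n$ insertion positions leave the number of descents unchanged, checking the edge positions (just after the $0$ and at the very end) separately.
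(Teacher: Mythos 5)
Your proposal is correct and follows essentially the same argument as the paper: delete the entry $\pm n$ to pass to $\mathcal{B}_{n-1}$, then count the insertion slots for $+n$ (end plus the $k$ descent gaps preserve the count; ascent gaps raise it) and for $-n$ (only the $k$ descent gaps preserve the count; ascent gaps and the end raise it), while observing that inserting $+n$ preserves the parity of the number of negative entries and inserting $-n$ flips it — exactly the paper's proof. One minor slip in your boundary-condition paragraph: the unique $\sigma$ with $\mathrm{desc}(\sigma)=n$ is $\sigma(i)=-i$, giving the decreasing word $(0,-1,-2,\ldots,-n)$, not $\sigma(i)=-(n+1-i)$ (whose word $(0,-n,-n+1,\ldots,-1)$ has only one descent); since either candidate has all $n$ entries negative, your parity conclusion for $D(n,n)$ and $\widetilde{D}(n,n)$ is unaffected.
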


\begin{proof}
For the sake of this proof we will identify an element $\sigma\in\mathcal{B}_{n}$
with the sequence $(\sigma_0,\ldots,\sigma_n)$,
where we write $\sigma_k$ instead of $\sigma(k)$.
Now for $(\sigma_0,\ldots,\sigma_n)\in\mathcal{B}_{n}$ we define
\[
\Lambda\sigma:=(\sigma_0,\ldots,\widehat{\sigma}_j,\ldots,\sigma_{n})\in\mathcal{B}_{n-1},
\]
where $j$ is such that $\sigma_j=\pm n$, and $\widehat{\sigma}_j$ means, that
the element ${\sigma}_j$ has been removed from the sequence.

For given $\sigma\in\mathcal{D}_{n,k}$, $0<k<n$, we have four possibilities:
\begin{itemize}
\item
$\sigma_j=n$ and either $j=n$ or $\sigma_{j-1}>\sigma_{j+1}$, $1\le j<n$.
Then $\Lambda\sigma\in\mathcal{D}_{n-1,k}$.

\item $\sigma_j=n$ and $\sigma_{j-1}<\sigma_{j+1}$, $1\le j<n$.
Then $\Lambda\sigma\in\mathcal{D}_{n-1,k-1}$.

\item $\sigma_j=-n$ and $\sigma_{j-1}>\sigma_{j+1}$, $1\le j<n$.
Then $\Lambda\sigma\in\widetilde{\mathcal{D}}_{n-1,k}$.

\item $\sigma_j=-n$ and either $j=n$ or $\sigma_{j-1}<\sigma_{j+1}$, $1\le j<n$.
Then $\Lambda\sigma\in\widetilde{\mathcal{D}}_{n-1,k-1}$.
\end{itemize}

Now, suppose we are given a fixed $\tau=(\tau_0,\ldots,\tau_{n-1})$
which belongs to one of the sets $\mathcal{D}_{n-1,k}$, $\mathcal{D}_{n-1,k-1}$,
$\widetilde{\mathcal{D}}_{n-1,k}$ or $\widetilde{\mathcal{D}}_{n-1,k-1}$.
We are looking for $\sigma\in\mathcal{D}_{n,k}$ such that $\Lambda\sigma=\tau$.

If $\tau\in\mathcal{D}_{n-1,k}$ then
we should either put $n$ to the end of $\tau$, or insert into a drop of $\tau$,
i.e. between $\tau_{i-1}$ and $\tau_{i}$, where $1\le i\le n-1$, $\tau_{i-1}>\tau_{i}$,
so we have $k+1$ possibilities.

Similarly, if $\tau\in\mathcal{D}_{n-1,k-1}$
then we construct $\sigma$ by inserting $n$ between $\tau_{i-1}$ and $\tau_{i}$, $1\le i\le n-1$,
where $\tau_{i-1}<\tau_{i}$. For this we have $n-k$ possibilities.

Now assume that $\tau\in\widetilde{\mathcal{D}}_{n-1,k}$.
Now we should insert $-n$ between
$\tau_{i-1}$ and $\tau_{i}$, $1\le i\le n-1$, where $\tau_{i-1}>\tau_{i}$,
for which we have $k$ possibilities.

Finally, if  $\tau\in\widetilde{\mathcal{D}}_{n-1,k-1}$
then we put $-n$ either at the end of $\tau$ or between $\tau_{i-1}$
and $\tau_{i}$, $1\le i\le n-1$, where $\tau_{i-1}<\tau_{i}$, for which we have $n-k+1$ possibilities.

Therefore the number of $\sigma\in\mathcal{D}_{n,k}$
such that $\Lambda\sigma$ belongs to the set $\mathcal{D}_{n-1,k}$,
$\mathcal{D}_{n-1,k-1}$, $\widetilde{\mathcal{D}}_{n-1,k}$ or
$\widetilde{\mathcal{D}}_{n-1,k-1}$
is equal to $(k+1)D(n-1, k)$, $(n-k)D(n-1, k-1)$,
$k\widetilde{D}(n-1, k)$ or $(n-k+1)\widetilde{D}(n-1, k-1)$
respectively. This proves the recurrence for the numbers $D(n,k)$.
For $\widetilde{D}(n,k)$ we proceed in the same way.

The boundary conditions are quite obvious:
$(0,1,2,\ldots,n)$ is the only element $\sigma\in\mathcal{B}_{n}$
such that $\mathrm{desc}(\sigma)=0$ and $(0,-1,-2,\ldots,-n)$
is the only element $\tau\in\mathcal{B}_{n}$ with
$\mathrm{desc}(\tau)=n$. The former belongs to $\mathcal{D}_{n}$,
while the latter belongs to $\mathcal{D}_{n}$ if and only if $n$ is even.
\end{proof}

Now we show a simple relation between the numbers
$D(n,k)$ and $\widetilde{D}(n,k)$.

\begin{proposition}\label{ddpropdifferecebinomial}
For $0\le k\le n$ we have
\begin{equation}\label{dddifferencebinomial}
D(n,k)-\widetilde{D}(n,k)=(-1)^k\binom{n}{k}.
\end{equation}
\end{proposition}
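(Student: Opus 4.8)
The plan is to prove the identity by induction on $n$, working directly with the recurrences of Theorem~\ref{ddrpropositionrecurrence}. Put $E(n,k):=D(n,k)-\widetilde{D}(n,k)$; the goal is $E(n,k)=(-1)^k\binom{n}{k}$.

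First I would subtract the recurrence for $\widetilde{D}(n,k)$ from the one for $D(n,k)$. The ``straight'' terms contribute $(k+1)E(n-1,k)+(n-k)E(n-1,k-1)$ while the ``cross'' terms contribute $-kE(n-1,k)-(n-k+1)E(n-1,k-1)$, and adding these gives the much simpler relation
\[
E(n,k)=E(n-1,k)-E(n-1,k-1),\qquad 0<k<n .
\]
This cancellation is really the heart of the argument; after it, everything is routine. Next I would record the boundary values coming from Theorem~\ref{ddrpropositionrecurrence}: $E(n,0)=D(n,0)-\widetilde{D}(n,0)=1$ and $E(n,n)=D(n,n)-\widetilde{D}(n,n)=(-1)^n$, which already agree with $(-1)^0\binom{n}{0}$ and $(-1)^n\binom{n}{n}$.

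The induction then runs as follows. The base case $n=0$ is $E(0,0)=1=\binom{0}{0}$. For the inductive step with $0<k<n$, the displayed relation and the inductive hypothesis give
\[
E(n,k)=(-1)^k\binom{n-1}{k}-(-1)^{k-1}\binom{n-1}{k-1}
=(-1)^k\Bigl[\binom{n-1}{k}+\binom{n-1}{k-1}\Bigr]=(-1)^k\binom{n}{k}
\]
by Pascal's rule, while for $k=0$ and $k=n$ one invokes the boundary values just computed. (Equivalently, extend $D$, $\widetilde{D}$ and $\binom{n}{k}$ by zero outside $0\le k\le n$; then both the displayed relation and Pascal's rule hold for all $k$ and the case distinction disappears.)

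As for obstacles, there is essentially no hard step here: the only point needing a little care is that the subtracted recurrence is asserted only for $0<k<n$, so the two extreme columns must be handled separately through the explicit boundary conditions (or via the zero-extension convention). A purely bijective argument---exhibiting the signed count $\sum_{\sigma\in\mathcal{B}_{n,k}}(-1)^{\mathrm{neg}(\sigma)}$, where $\mathrm{neg}(\sigma)$ counts the negative values among $\sigma(1),\dots,\sigma(n)$, as $(-1)^k\binom{n}{k}$---is also conceivable, but it is less transparent than the telescoping recurrence above, so I would present the inductive proof.
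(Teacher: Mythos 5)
Your proof is correct and is essentially identical to the paper's: both subtract the two recurrences of Theorem~\ref{ddrpropositionrecurrence}, observe the cancellation down to $E(n,k)=E(n-1,k)-E(n-1,k-1)$, handle the boundary columns via the explicit values of $D(n,0)$, $\widetilde D(n,0)$, $D(n,n)$, $\widetilde D(n,n)$, and finish by induction with Pascal's rule. The only cosmetic difference is that you isolate the intermediate relation explicitly, which the paper leaves implicit in one displayed computation.
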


\begin{proof}
For $k=0$ and $k=n$ this is straightforward. Assume that $0<k<n$ and that the equality holds for $n-1$.
From Proposition~\ref{ddrpropositionrecurrence} we have
\[
D(n,k)-\widetilde{D}(n,k)=
(k+1)\binom{n-1}{k}(-1)^k+(n-k)\binom{n-1}{k-1}(-1)^{k-1}
\]
\[-k\binom{n-1}{k}(-1)^k-(n-k+1)\binom{n-1}{k-1}(-1)^{k-1}
\]
\[
=\binom{n-1}{k}(-1)^k-\binom{n-1}{k-1}(-1)^{k-1}=\binom{n}{k}(-1)^k.
\]
\end{proof}

Combining (\ref{ddplusddisbb}) with (\ref{dddifferencebinomial}) we get

\begin{corollary}\label{ddcorollarydb}
\begin{align}
D(n,k)&=\frac{1}{2}B(n,k)+\frac{1}{2}\binom{n}{k}(-1)^k,\\
\widetilde{D}(n,k)&=\frac{1}{2}B(n,k)-\frac{1}{2}\binom{n}{k}(-1)^k.
\end{align}
\end{corollary}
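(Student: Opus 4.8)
The plan is to treat $D(n,k)$ and $\widetilde{D}(n,k)$ as the two unknowns of a $2\times2$ linear system whose equations have already been established. The first equation is the partition identity (\ref{ddplusddisbb}), namely $B(n,k)=D(n,k)+\widetilde{D}(n,k)$, which merely records that $\mathcal{B}_{n,k}$ is the disjoint union of $\mathcal{D}_{n,k}$ and $\widetilde{\mathcal{D}}_{n,k}$. The second is the difference formula (\ref{dddifferencebinomial}) from Proposition~\ref{ddpropdifferecebinomial}, namely $D(n,k)-\widetilde{D}(n,k)=(-1)^k\binom{n}{k}$.

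Adding these two identities eliminates $\widetilde{D}(n,k)$ and yields $2D(n,k)=B(n,k)+(-1)^k\binom{n}{k}$, which gives the first claimed formula after dividing by $2$; subtracting (\ref{dddifferencebinomial}) from (\ref{ddplusddisbb}) eliminates $D(n,k)$ and gives $2\widetilde{D}(n,k)=B(n,k)-(-1)^k\binom{n}{k}$, the second formula. There is no genuine obstacle here: the whole argument is the inversion of a system with constant coefficients, and both input identities are already proved. As a sanity check one can confirm the formulas against the small tables of $B(n,k)$, $D(n,k)$ and $\widetilde{D}(n,k)$ displayed above (for instance $D(4,2)=\tfrac12\cdot230+\tfrac12\binom{4}{2}=115+3=118$), but this is not needed for the proof.
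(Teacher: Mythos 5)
Your proof is correct and is exactly the paper's argument: the corollary is stated there as an immediate consequence of combining (\ref{ddplusddisbb}) with (\ref{dddifferencebinomial}), i.e.\ solving the same $2\times2$ linear system by adding and subtracting. Nothing is missing, and your numerical check $D(4,2)=118$ agrees with the displayed table.
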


Now we are able to provide independent recurrences for $D(n,k)$ and $\widetilde{D}(n,k)$.

\begin{proposition}\label{ddpropositionrecurrenceb}
The numbers $D(n,k)$, $\widetilde{D}(n,k)$ admit the following recurrence:
\begin{align*}
D(n, 0)&=1,& \widetilde{D}(n, 0)&=0,\\
D(n,n)&=\frac{1 + (-1)^n}{2},&\widetilde{D}(n, n)&=\frac{1 - (-1)^n}{2}
\end{align*}
and for $0<k<n$
\begin{align*}
D(n, k)&=(2 k + 1)D(n - 1, k) + (2 n - 2 k + 1)D(n - 1,k - 1) + \binom{n - 1}{k - 1}(-1)^k,\\
\widetilde{D}(n, k)&= (2 k + 1) \widetilde{D}(n - 1, k) + (2 n - 2 k + 1)\widetilde{D}(n-1,k-1) -\binom{n - 1}{k - 1} (-1)^k.
\end{align*}
\end{proposition}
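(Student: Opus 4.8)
The plan is to eliminate $\widetilde{D}$ from the coupled recurrence of Theorem~\ref{ddrpropositionrecurrence} using the identity of Proposition~\ref{ddpropdifferecebinomial}. Concretely, in the first recurrence
\[
D(n,k)=(k+1)D(n-1,k)+(n-k)D(n-1,k-1)+k\widetilde{D}(n-1,k)+(n-k+1)\widetilde{D}(n-1,k-1)
\]
I would substitute $\widetilde{D}(n-1,j)=D(n-1,j)-(-1)^{j}\binom{n-1}{j}$ for $j=k$ and $j=k-1$. Collecting the coefficients of $D(n-1,k)$ and $D(n-1,k-1)$ immediately yields $(2k+1)$ and $(2n-2k+1)$ respectively, so it only remains to check that the leftover binomial terms simplify to $(-1)^{k}\binom{n-1}{k-1}$.

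That leftover equals $-k(-1)^{k}\binom{n-1}{k}-(n-k+1)(-1)^{k-1}\binom{n-1}{k-1}=(-1)^{k}\bigl[(n-k+1)\binom{n-1}{k-1}-k\binom{n-1}{k}\bigr]$, and the one elementary fact needed is $k\binom{n-1}{k}=(n-k)\binom{n-1}{k-1}$, after which the bracket collapses to $\binom{n-1}{k-1}$. The recurrence for $\widetilde{D}(n,k)$ follows the same way, either by repeating the computation with $D(n-1,j)=\widetilde{D}(n-1,j)+(-1)^{j}\binom{n-1}{j}$, or simply by subtracting the $D$-recurrence from the type-$B$ recurrence (\ref{bbrecurrence}) using (\ref{ddplusddisbb}).

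For the boundary conditions there is nothing to prove beyond rewriting: $D(n,0)=1$ and $\widetilde{D}(n,0)=0$ are already recorded in Theorem~\ref{ddrpropositionrecurrence}, while the closed forms $D(n,n)=\frac{1+(-1)^n}{2}$ and $\widetilde{D}(n,n)=\frac{1-(-1)^n}{2}$ are just the parity-dependent values of that theorem written as a single formula.

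I expect the only mildly delicate point to be the binomial bookkeeping in the middle step, in particular keeping the signs $(-1)^{k}$ versus $(-1)^{k-1}$ straight, but this is entirely routine. An essentially equivalent alternative is to combine Corollary~\ref{ddcorollarydb} with the type-$B$ recurrence (\ref{bbrecurrence}): writing $D(n,k)=\tfrac12 B(n,k)+\tfrac12(-1)^{k}\binom{n}{k}$, feeding in (\ref{bbrecurrence}) and Pascal's rule $\binom{n}{k}=\binom{n-1}{k-1}+\binom{n-1}{k}$ reduces the whole statement to the same single binomial identity.
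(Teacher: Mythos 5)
Your proposal is correct and matches the paper's approach: the paper proves this proposition exactly by combining Theorem~\ref{ddrpropositionrecurrence} with (\ref{dddifferencebinomial}), merely omitting the binomial bookkeeping that you carry out explicitly (and your key identity $k\binom{n-1}{k}=(n-k)\binom{n-1}{k-1}$ and sign-tracking are right).
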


\begin{proof}
This is a consequence of Proposition~\ref{ddrpropositionrecurrence} and (\ref{dddifferencebinomial}).
\end{proof}

Now we can prove the following
Worpitzky type identities.

\begin{proposition}
For $n\ge0$, $x\in\mathbb{R}$ we have:
\begin{align}
2\sum_{k=0}^{n}\binom{x+k}{n}D(n,k)&=(2x+1)^n+(-1)^n,\\
2\sum_{k=0}^{n}\binom{x+k}{n}\widetilde{D}(n,k)&=(2x+1)^n-(-1)^n.
\end{align}
\end{proposition}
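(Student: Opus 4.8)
The plan is to reduce both identities to the type $B$ Worpitzky formula (\ref{bbworpitzky}) by means of Corollary~\ref{ddcorollarydb}. Multiplying the first formula of that corollary by $2$ gives $2D(n,k)=B(n,k)+(-1)^k\binom{n}{k}$, so that
\[
2\sum_{k=0}^{n}\binom{x+k}{n}D(n,k)
=\sum_{k=0}^{n}\binom{x+k}{n}B(n,k)
+\sum_{k=0}^{n}(-1)^k\binom{n}{k}\binom{x+k}{n}.
\]
By (\ref{bbworpitzky}) the first sum on the right is $(2x+1)^n$, so the whole identity for $D(n,k)$ comes down to showing that
\[
S_n(x):=\sum_{k=0}^{n}(-1)^k\binom{n}{k}\binom{x+k}{n}=(-1)^n
\]
for all $n\ge0$ and $x\in\mathbb{R}$. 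The identity for $\widetilde{D}(n,k)$ then follows at once, either by running the same argument with the second formula of Corollary~\ref{ddcorollarydb} (which only flips the sign of the binomial term), or simply by subtracting the $D$-identity from twice (\ref{bbworpitzky}).

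To evaluate $S_n(x)$ I would use finite differences. For fixed $x$, the map
\[
k\longmapsto\binom{x+k}{n}=\frac{(x+k)(x+k-1)\cdots(x+k-n+1)}{n!}
\]
is a polynomial in $k$ of degree $n$ with leading coefficient $1/n!$. Since $\sum_{k=0}^{n}(-1)^k\binom{n}{k}g(k)=(-1)^n(\Delta^n g)(0)$, where $\Delta g(k)=g(k+1)-g(k)$ is the forward difference operator, and since the $n$-th finite difference of a polynomial of degree $n$ with leading coefficient $c$ is the constant $c\cdot n!$, we obtain $S_n(x)=(-1)^n\cdot\frac{1}{n!}\cdot n!=(-1)^n$, independently of $x$. (An alternative is a short induction on $n$: applying Pascal's rule $\binom{x+k}{n}=\binom{x+k-1}{n}+\binom{x+k-1}{n-1}$ and the identity $\binom{n}{k}=\binom{n-1}{k}+\binom{n-1}{k-1}$ reduces $S_n(x)$ to $-S_{n-1}(x)$, with base case $S_0(x)=1$; one could equally well note that $S_n$ is a polynomial in $x$ of degree $\le n$ all of whose coefficients except the constant term vanish.)

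Substituting $S_n(x)=(-1)^n$ back yields $2\sum_{k=0}^{n}\binom{x+k}{n}D(n,k)=(2x+1)^n+(-1)^n$, and the complementary formula follows in the same manner. The only step requiring any real work is the evaluation of $S_n(x)$, and even that is a classical finite-difference computation, so I do not anticipate any genuine obstacle; the rest is bookkeeping with Corollary~\ref{ddcorollarydb} and (\ref{bbworpitzky}).
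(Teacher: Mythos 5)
Your proof is correct and follows the same route as the paper: reduce to the type $B$ Worpitzky formula (\ref{bbworpitzky}) via Corollary~\ref{ddcorollarydb} and the identity $\sum_{k=0}^{n}(-1)^k\binom{n}{k}\binom{x+k}{n}=(-1)^n$. The only difference is that the paper cites this last identity from Graham--Knuth--Patashnik, whereas you supply a (valid) finite-difference proof of it.
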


\begin{proof}
These formulas follow from Corollary~\ref{ddcorollarydb}, (\ref{bbworpitzky})
and from the identity
\begin{equation}
\sum_{k=0}^{n}\binom{x+k}{n}\binom{n}{k}(-1)^k=(-1)^n,
\end{equation}
see for example (5.24) in \cite{gkp}.
\end{proof}

Define polynomials
\begin{align*}
P^{\mathrm{D}}_n(t)&:=\sum_{k=0}^n D(n,k)t^k,\\
P^{\widetilde{\mathrm{D}}}_n(t)&:=\sum_{k=0}^n\widetilde{D}(n,k)t^k.
\end{align*}
From (\ref{ddplusddisbb}) and (\ref{dddifferencebinomial}) we have
\begin{align}
P^{\mathrm{D}}_n(t)+P^{\widetilde{\mathrm{D}}}_n(t)&=P^{\mathrm{B}}_n(t),\\
P^{\mathrm{D}}_n(t)-P^{\widetilde{\mathrm{D}}}_n(t)&=(1-t)^n,
\end{align}
which implies
\begin{align}
P^{\mathrm{D}}_n(t)&=\frac{1}{2}P^{\mathrm{B}}_n(t)+\frac{1}{2}(1-t)^n,\label{ddgenbb0}\\
P^{\widetilde{\mathrm{D}}}_n(t)&=\frac{1}{2}P^{\mathrm{B}}_n(t)-\frac{1}{2}(1-t)^n.\label{ddgenbb1}
\end{align}

\begin{proposition}
For $n\ge1$ we have
\begin{align}
P^{\mathrm{D}}_n(t)&=(2nt-t+1)P^{\mathrm{D}}_{n-1}(t)
+2t(1-t)\frac{\mathrm{d}}{\mathrm{d}t}P^{\mathrm{D}}_{n-1}(t)-t(1-t)^{n-1},\label{ddpoly0}\\
P^{\widetilde{\mathrm{D}}}_n(t)&=(2nt-t+1)P^{\widetilde{\mathrm{D}}}_{n-1}(t)
+2t(1-t)\frac{\mathrm{d}}{\mathrm{d}t}P^{\widetilde{\mathrm{D}}}_{n-1}(t)+t(1-t)^{n-1}.
\end{align}
\end{proposition}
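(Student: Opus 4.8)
We plan to derive both identities directly from the independent recurrence of Proposition~\ref{ddpropositionrecurrenceb} by passing to the generating polynomials $P^{\mathrm D}_n(t)$ and $P^{\widetilde{\mathrm D}}_n(t)$. The preliminary observation is that the recurrence
\[
D(n,k)=(2k+1)D(n-1,k)+(2n-2k+1)D(n-1,k-1)+\binom{n-1}{k-1}(-1)^k
\]
in fact holds for \emph{all} $0\le k\le n$, not only for $0<k<n$: with the conventions $D(m,-1)=D(m,m+1)=0$ and $\binom{m}{-1}=0$ it reduces at $k=0$ to $D(n,0)=D(n-1,0)=1$, while at $k=n$ it reads $D(n,n)=D(n-1,n-1)+(-1)^n$, which agrees with $D(n,n)=\frac{1+(-1)^n}{2}$ since $(-1)^{n-1}=-(-1)^n$. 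The same remark applies verbatim to $\widetilde{D}(n,k)$. This lets us sum over the full range $k=0,\dots,n$ with no correction terms.

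Next I would multiply this identity by $t^k$ and sum over $k$. Using $\sum_k kD(n-1,k)t^k=t\frac{\mathrm d}{\mathrm dt}P^{\mathrm D}_{n-1}(t)$, the index shift $j=k-1$ in the terms containing $D(n-1,k-1)$, giving
\[
\sum_{k=0}^{n}(2n-2k+1)D(n-1,k-1)t^k=(2n-1)tP^{\mathrm D}_{n-1}(t)-2t^2\frac{\mathrm d}{\mathrm dt}P^{\mathrm D}_{n-1}(t),
\]
and the elementary evaluation
\[
\sum_{k=0}^{n}\binom{n-1}{k-1}(-1)^kt^k=-t\sum_{j=0}^{n-1}\binom{n-1}{j}(-t)^j=-t(1-t)^{n-1},
\]
one collects the coefficient of $P^{\mathrm D}_{n-1}(t)$ into $1+(2n-1)t=2nt-t+1$ and the coefficient of its derivative into $2t-2t^2=2t(1-t)$, which is precisely (\ref{ddpoly0}).

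The computation for $P^{\widetilde{\mathrm D}}_n(t)$ is word for word the same, the only change being the sign of the binomial term, which produces $+t(1-t)^{n-1}$; alternatively it follows from (\ref{ddpoly0}) and the relation $P^{\widetilde{\mathrm D}}_n(t)=P^{\mathrm D}_n(t)-(1-t)^n$ after using $\frac{\mathrm d}{\mathrm dt}(1-t)^{n-1}=-(n-1)(1-t)^{n-2}$ and simplifying the resulting powers of $(1-t)$. An equally viable alternative to the whole argument is to first establish, in exactly the same way from (\ref{bbrecurrence}), the recurrence $P^{\mathrm B}_n(t)=(2nt-t+1)P^{\mathrm B}_{n-1}(t)+2t(1-t)\frac{\mathrm d}{\mathrm dt}P^{\mathrm B}_{n-1}(t)$, and then substitute $P^{\mathrm B}_n(t)=2P^{\mathrm D}_n(t)-(1-t)^n$ from (\ref{ddgenbb0}); the terms $\mp t(1-t)^{n-1}$ then emerge after collecting the $(1-t)$-powers.

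There is no genuine obstacle in this proof; it is a routine generating-function manipulation. The only point that needs a little care — and the reason I isolate it as the first step — is the bookkeeping at the two endpoints $k=0$ and $k=n$, where the boundary conditions of Proposition~\ref{ddpropositionrecurrenceb} must be seen to be consistent with the $0<k<n$ formula under the stated vanishing conventions; once that is checked, the summation and term collection are entirely mechanical.
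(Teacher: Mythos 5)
Your proof is correct and follows essentially the same route as the paper: multiply the recurrence of Proposition~\ref{ddpropositionrecurrenceb} by $t^k$, sum over $k$ using the three elementary summation identities, and collect coefficients. Your explicit verification that the recurrence extends to the endpoints $k=0$ and $k=n$ under the vanishing conventions is a welcome bit of care that the paper leaves implicit.
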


\begin{proof}
We have
\[
\sum_{k=0}^{n}(2k+1)D(n-1,k)t^k=
2t\frac{\mathrm{d}}{\mathrm{d}t}P^{\mathrm{D}}_{n-1}(t)+P^{\mathrm{D}}_{n-1}(t),
\]
\[
\sum_{k=0}^{n}(2n-2k+1)D(n-1,k-1)t^k=
(2n-1)t P^{\mathrm{D}}_{n-1}(t)-2t^2\frac{\mathrm{d}}{\mathrm{d}t}P^{\mathrm{D}}_{n-1}(t)
\]
and
\[
\sum_{k=0}^{n}\binom{n-1}{k-1}(-1)^k t^k=-t(1-t)^{n-1}.
\]
Summing up and applying Proposition~\ref{ddpropositionrecurrenceb} we get the first equality.
The second one we obtain in the same way.
\end{proof}

Now we are able to find the generating functions.

\begin{proposition}
The exponential generating functions
\[
f^{\mathrm{D}}(t,z):=\sum_{n=0}^{\infty}\frac{P^{\mathrm{D}}_n(t)}{n!}z^n,
\]
\[
f^{\widetilde{\mathrm{D}}}(t,z):=\sum_{n=0}^{\infty}\frac{P^{\widetilde{\mathrm{D}}}_n(t)}{n!}z^n
\]
satisfy the following differential equations
\[
(1+t)f^{\mathrm{D}}(t,z)+(2tz-1)\frac{\partial f^{\mathrm{D}}}{\partial z}(t,z)
+2t(1-t)\frac{\partial f^{\mathrm{D}}}{\partial t}(t,z)=t e^{(1-t)z},
\]
\[
(1+t)f^{\widetilde{\mathrm{D}}}(t,z)
+(2tz-1)\frac{\partial f^{\widetilde{\mathrm{D}}}}{\partial z}(t,z)
+2t(1-t)\frac{\partial f^{\widetilde{\mathrm{D}}}}{\partial t}(t,z)=-t e^{(1-t)z},
\]
with the initial conditions $f^{\widetilde{\mathrm{D}}}(t,0)=1$,
$f^{\widetilde{\mathrm{D}}}(t,0)=0$. We have
\begin{align}
f^{\mathrm{D}}(t,z)&=\frac{(2-t)e^{(1-t)z}-t e^{3(1-t)z}}{2-2t e^{2(1-t)z}},\label{ddgeneratingfunction0}\\
f^{\widetilde{\mathrm{D}}}(t,z)&=\frac{t e^{3(1-t)z}-t e^{(1-t)z}}{2-2t e^{2(1-t)z}}.\label{ddgeneratingfunction1}
\end{align}
\end{proposition}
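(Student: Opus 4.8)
The plan is to obtain the two differential equations directly from the polynomial recurrence (\ref{ddpoly0}) proved above, and then to read off the closed forms from the already established identities (\ref{ddgenbb0})--(\ref{ddgenbb1}) together with the known type $B$ generating function (\ref{bbgeneratingfunction}).

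First I would derive the equation for $f^{\mathrm{D}}$. Multiply the recurrence $P^{\mathrm{D}}_n(t)=(2nt-t+1)P^{\mathrm{D}}_{n-1}(t)+2t(1-t)\frac{\mathrm{d}}{\mathrm{d}t}P^{\mathrm{D}}_{n-1}(t)-t(1-t)^{n-1}$ by $z^{n-1}/(n-1)!$ and sum over $n\ge1$. On the left this is $\partial f^{\mathrm{D}}/\partial z$. On the right, after the substitution $m=n-1$, the three terms become $\sum_{m\ge0}\bigl(2(m+1)t-t+1\bigr)P^{\mathrm{D}}_m(t)z^m/m!$, then $2t(1-t)\sum_{m\ge0}\frac{\mathrm{d}}{\mathrm{d}t}P^{\mathrm{D}}_m(t)z^m/m!$, and finally $-t\sum_{m\ge0}(1-t)^m z^m/m!$. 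Using $\sum_m m\,a_m z^m/m!=z\,\partial_z\sum_m a_m z^m/m!$ and the crucial rewriting $2(m+1)t-t+1=2mt+(t+1)$, the first sum collapses to $(1+t)f^{\mathrm{D}}+2tz\,\partial_z f^{\mathrm{D}}$; the second is $2t(1-t)\,\partial_t f^{\mathrm{D}}$; the third is the geometric series $-t\,e^{(1-t)z}$. Collecting terms and transposing gives exactly $(1+t)f^{\mathrm{D}}+(2tz-1)\,\partial_z f^{\mathrm{D}}+2t(1-t)\,\partial_t f^{\mathrm{D}}=t\,e^{(1-t)z}$. The computation for $f^{\widetilde{\mathrm{D}}}$ is word for word the same, the only change being the sign of the last term of the recurrence, which flips the right-hand side to $-t\,e^{(1-t)z}$. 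The initial conditions are immediate: $f^{\mathrm{D}}(t,0)=P^{\mathrm{D}}_0(t)=D(0,0)=1$ and $f^{\widetilde{\mathrm{D}}}(t,0)=P^{\widetilde{\mathrm{D}}}_0(t)=\widetilde{D}(0,0)=0$ (so the first of the two stated initial conditions should read $f^{\mathrm{D}}(t,0)=1$).

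For the closed forms, the shortest route bypasses solving the PDE: from (\ref{ddgenbb0}) and the expansion $\sum_{n\ge0}(1-t)^n z^n/n!=e^{(1-t)z}$ one gets $f^{\mathrm{D}}(t,z)=\tfrac12 f^{\mathrm{B}}(t,z)+\tfrac12 e^{(1-t)z}$, and likewise $f^{\widetilde{\mathrm{D}}}(t,z)=\tfrac12 f^{\mathrm{B}}(t,z)-\tfrac12 e^{(1-t)z}$ from (\ref{ddgenbb1}). Substituting the expression (\ref{bbgeneratingfunction}) for $f^{\mathrm{B}}$ and putting the two summands over the common denominator $2\bigl(1-t e^{2(1-t)z}\bigr)$, the numerator of $f^{\mathrm{D}}$ becomes $(1-t)e^{(1-t)z}+e^{(1-t)z}\bigl(1-t e^{2(1-t)z}\bigr)=(2-t)e^{(1-t)z}-t e^{3(1-t)z}$, which is (\ref{ddgeneratingfunction0}); the analogous step for $f^{\widetilde{\mathrm{D}}}$ gives numerator $t e^{3(1-t)z}-t e^{(1-t)z}$, which is (\ref{ddgeneratingfunction1}). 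If one prefers to present things via the PDE, it suffices to verify that these closed forms satisfy the differential equation and the initial condition and then invoke uniqueness for this first-order linear PDE; but that amounts to differentiating a quotient and is less transparent than the direct computation.

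I expect the only real obstacle to be bookkeeping in the first step: getting the index shift $m=n-1$ right so that the coefficient $2nt$ produces both the term $2tz\,\partial_z f^{\mathrm{D}}$ and the extra $2t$ that merges with $-t+1$ into $1+t$, and recognizing the leftover power series $\sum_{m\ge0}(1-t)^m z^m/m!$ as $e^{(1-t)z}$. Everything else is routine algebra.
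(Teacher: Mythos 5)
Your proposal is correct and follows essentially the same route as the paper: deriving the PDE from the recurrence (\ref{ddpoly0}) by generating-function manipulation (you shift the index and sum against $z^{n-1}/(n-1)!$ directly, where the paper sums against $z^n/n!$ to get an equation in the antiderivative $F^{\mathrm{D}}$ and then differentiates in $z$ --- the same computation in a different order), and obtaining the closed forms directly from (\ref{bbgeneratingfunction}) and (\ref{ddgenbb0},\ref{ddgenbb1}). You also correctly spot that the first stated initial condition is a typo for $f^{\mathrm{D}}(t,0)=1$.
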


\begin{proof}
Multiplying both sides of (\ref{ddpoly0}) by $z^n/n!$ and taking sum $\sum_{n=0}^{\infty}$
we obtain equation
\begin{equation}\label{dddiffeqproof}
f^{\mathrm{D}}=2tz f^{\mathrm{D}}+(1-t)F^{\mathrm{D}}
+2t(1-t)\frac{\partial F^{\mathrm{D}}}{\partial t}-\frac{t}{1-t}e^{(1-t)z},
\end{equation}
where
\[
F^{\mathrm{D}}(t,z):=\sum_{n=0}^{\infty}\frac{P^{\mathrm{D}}_{n-1}(t)}{n!}z^n.
\]
Taking the derivative $\frac{\partial}{\partial z}$
of both sides of (\ref{dddiffeqproof}) and keeping in mind that
$\frac{\partial F^{\mathrm{D}}}{\partial z}=f^{\mathrm{D}}$, we get the first
differential equation. The second is obtained in the same way.

Formulas (\ref{ddgeneratingfunction0},\ref{ddgeneratingfunction1})
follow directly from (\ref{bbgeneratingfunction}) and (\ref{ddgenbb0},\ref{ddgenbb1}).
\end{proof}

Finally we prove positive definiteness of the sequence $\left\{P^{\mathrm{D}}_{n}(t)\right\}_{n=0}^{\infty}$.

\begin{theorem}
If $t\ge0$ then $\left\{P^{\mathrm{D}}_{n}(t)\right\}_{n=0}^{\infty}$
is the moment sequence of probability measure $\mu^{\mathrm{D}}_{t}$
given by: $\mu^{\mathrm{D}}_{0}=\delta_{1}$,
\begin{align*}
\mu^{\mathrm{D}}_{t}&=\frac{2-t}{2}\delta_{1-t}+\sum_{k=1}^{\infty}\frac{(1-t)t^k}{2}\delta_{(2k+1)(1-t)}\\
\intertext{if $0<t<1$,}
\mu^{\mathrm{D}}_{1}&=
\frac{1}{2}\delta_{0}+\frac{1}{4}e^{-x/2}\chi_{[0,+\infty)}(x)\,dx,\\
\intertext{and for $t>1$}
\mu^{\mathrm{D}}_{t}&=\frac{1}{2}\delta_{1-t}+\sum_{k=0}^{\infty}\frac{t-1}{2t^{k+1}}\delta_{(2k+1)(t-1)}.
\end{align*}
\end{theorem}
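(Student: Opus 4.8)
The plan is to reduce everything to the already-established type $B$ result via the identity (\ref{ddgenbb0}). Recall that $P^{\mathrm{D}}_n(t)=\tfrac12 P^{\mathrm{B}}_n(t)+\tfrac12(1-t)^n$. Since $(1-t)^n$ is precisely the $n$-th moment of the Dirac mass $\delta_{1-t}$, and since by Theorem~\ref{bbpropositionmeasure} the number $P^{\mathrm{B}}_n(t)$ is the $n$-th moment of the probability measure $\mu^{\mathrm{B}}_t$, linearity of the integral immediately gives that $P^{\mathrm{D}}_n(t)$ is the $n$-th moment of
\[
\nu_t:=\tfrac12\,\mu^{\mathrm{B}}_t+\tfrac12\,\delta_{1-t}.
\]
Being a convex combination of two probability measures, $\nu_t$ is itself a probability measure, so the positive-definiteness and the moment identity are established at once.

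It then remains to check that $\nu_t$ coincides with the measure $\mu^{\mathrm{D}}_t$ as written in the statement, which is just a matter of collecting the atom located at $1-t$. For $0<t<1$, the $k=0$ term of $\tfrac12\mu^{\mathrm{B}}_t$ is $\tfrac{1-t}{2}\delta_{1-t}$, and adding $\tfrac12\delta_{1-t}$ produces $\tfrac{2-t}{2}\delta_{1-t}$, while the terms with $k\ge1$ are unchanged; this is exactly the displayed expression. For $t=1$ we have $1-t=0$, $\tfrac12\mu^{\mathrm{B}}_1=\tfrac14 e^{-x/2}\chi_{[0,+\infty)}(x)\,dx$, and $\tfrac12\delta_{1-t}=\tfrac12\delta_0$, giving the stated form. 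For $t>1$ the point $1-t$ is not among the atoms $(2k+1)(t-1)$ of $\mu^{\mathrm{B}}_t$, so $\nu_t=\tfrac12\delta_{1-t}+\sum_{k=0}^{\infty}\tfrac{t-1}{2t^{k+1}}\delta_{(2k+1)(t-1)}$ as claimed; and for $t=0$ both $\mu^{\mathrm{B}}_0$ and $\delta_{1-t}$ equal $\delta_1$, so $\nu_0=\delta_1=\mu^{\mathrm{D}}_0$.

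There is no real obstacle here: the substantive content is entirely inherited from Theorems~\ref{aapropmeasure} and \ref{bbpropositionmeasure} through the algebraic decomposition (\ref{ddgenbb0}), and the only work is the elementary reorganization of the atom at $1-t$ in the three regimes $0<t<1$, $t=1$, $t>1$ (together with the trivial case $t=0$). If one prefers a self-contained verification not referring to $\mu^{\mathrm{B}}_t$, one can alternatively compute $\int x^n\,d\mu^{\mathrm{D}}_t$ directly using (\ref{bbeulersummation}) for $0<t<1$, the substitution $t:=1/s$ together with the symmetry $t^nP^{\mathrm{B}}_n(1/t)=P^{\mathrm{B}}_n(t)$ for $t>1$, and $\Gamma(n+1)=2^nn!$-type integrals for $t=1$, exactly as in the proofs of the two preceding theorems; but the route through (\ref{ddgenbb0}) is shorter and is the one I would present.
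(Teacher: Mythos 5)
Your proof is correct and is essentially identical to the paper's own argument, which also writes $\mu^{\mathrm{D}}_{t}=\frac{1}{2}\mu^{\mathrm{B}}_{t}+\frac{1}{2}\delta_{1-t}$ via (\ref{ddgenbb0}) and invokes Theorem~\ref{bbpropositionmeasure}. Your explicit bookkeeping of the atom at $1-t$ in the three regimes is just a more detailed write-up of the same reduction.
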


\begin{proof}
From (\ref{ddgenbb0})
we have $\mu^{\mathrm{D}}_{t}=\frac{1}{2}\mu^{\mathrm{B}}_{t}+\frac{1}{2}\delta_{1-t}$
and now we apply Theorem~\ref{bbpropositionmeasure}.
\end{proof}

Since $\left\{P^{\widetilde{\mathrm{D}}}_n(t)\right\}_{n=0}^{\infty}$
is the moment sequence of the non-positive measure $\frac{1}{2}\mu^{\mathrm{B}}_{t}-\frac{1}{2}\delta_{1-t}$,
it is not positive definite. It turns out however,
that for $t\ge1$ the sequence $\left\{P^{\widetilde{\mathrm{D}}}_{n+1}(t)\right\}_{n=0}^{\infty}$
is positive definite.

\begin{proposition}
For $t\ge1$ the sequence $\left\{P^{\widetilde{\mathrm{D}}}_{n+1}(t)/t\right\}_{n=0}^{\infty}$
is positive definite and the corresponding probability measure $\nu_{t}$ is
\begin{align*}
\nu_1&=\frac{1}{4}xe^{-x/2}\chi_{[0,+\infty)}(x)\,dx\\
\intertext{and for $t>1$}
\nu_{t}&=\frac{t-1}{2t}\delta_{1-t}
+\sum_{k=0}^{\infty}\frac{(t-1)^2(2k+1)}{2t^{k+2}}\delta_{(t-1)(2k+1)}.
\end{align*}
\end{proposition}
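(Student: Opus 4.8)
The plan is to exhibit $\{P^{\widetilde{\mathrm{D}}}_{n+1}(t)/t\}_{n=0}^{\infty}$ as the moment sequence of an explicit nonnegative measure, produced from the (signed) representing measure of $\{P^{\widetilde{\mathrm{D}}}_{n}(t)\}_{n=0}^{\infty}$ by multiplying it by the function $x/t$. Put $\mu^{\widetilde{\mathrm{D}}}_{t}:=\frac{1}{2}\mu^{\mathrm{B}}_{t}-\frac{1}{2}\delta_{1-t}$; as already noted just above the proposition, $P^{\widetilde{\mathrm{D}}}_{n}(t)=\int x^{n}\,d\mu^{\widetilde{\mathrm{D}}}_{t}(x)$ by (\ref{ddgenbb1}) together with Theorem~\ref{bbpropositionmeasure} and the trivial identity $(1-t)^{n}=\int x^{n}\,d\delta_{1-t}$. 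First I would invoke the elementary fact that if a finite signed measure $\rho$ has finite moments $m_{n}=\int x^{n}\,d\rho$ for every $n$, then $x\,d\rho(x)$ is again a finite signed measure with moments $m_{n+1}$. Applied to $\rho=\mu^{\widetilde{\mathrm{D}}}_{t}$ and rescaled by $1/t$, this gives that $\nu_{t}:=\frac{1}{t}\,x\,d\mu^{\widetilde{\mathrm{D}}}_{t}(x)$ has $n$-th moment $P^{\widetilde{\mathrm{D}}}_{n+1}(t)/t$. For $t\ge1$ all moments of $\mu^{\mathrm{B}}_{t}$ are finite (its atomic weights decay geometrically when $t>1$, and it has an exponential density when $t=1$), so these manipulations, and every interchange of summation and integration below, are justified by absolute convergence.

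Next I would compute $\nu_{t}$ explicitly from Theorem~\ref{bbpropositionmeasure}. For $t>1$ we have $\mu^{\mathrm{B}}_{t}=\sum_{k=0}^{\infty}\frac{t-1}{t^{k+1}}\delta_{(2k+1)(t-1)}$, so multiplying $\frac{1}{2}\mu^{\mathrm{B}}_{t}$ by $x/t$ turns the atom at $(2k+1)(t-1)$ into one of weight $\frac{(t-1)^{2}(2k+1)}{2t^{k+2}}$, while $-\frac{1}{2}\delta_{1-t}$ becomes $\frac{1-t}{t}\cdot(-\frac{1}{2})\,\delta_{1-t}=\frac{t-1}{2t}\delta_{1-t}$; since $1-t<0$ is distinct from every positive atom $(2k+1)(t-1)$, these terms assemble into exactly the nonnegative measure $\nu_{t}$ stated in the proposition. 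For $t=1$ the lone negative atom of $\mu^{\widetilde{\mathrm{D}}}_{1}$ sits at the origin, and multiplication by $x$ annihilates it, leaving $\nu_{1}=\frac{1}{4}xe^{-x/2}\chi_{[0,+\infty)}(x)\,dx$. That $\nu_{t}$ is a probability measure is a one-line check: its total mass is the $0$-th moment $P^{\widetilde{\mathrm{D}}}_{1}(t)/t=(\widetilde{D}(1,0)+\widetilde{D}(1,1)t)/t=t/t=1$. Positive definiteness is then immediate, since $\nu_{t}\ge0$ and $\int x^{n}\,d\nu_{t}=P^{\widetilde{\mathrm{D}}}_{n+1}(t)/t$, so the criterion recalled in Section~2 applies.

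The one genuinely delicate point is the sign bookkeeping for the atom at $1-t$, and this is precisely where the hypothesis $t\ge1$ is used: for $t>1$ the point $1-t$ is negative, so multiplying by $x$ flips the offending mass $-\frac{1}{2}$ into the positive mass $\frac{t-1}{2t}$, whereas for $0<t<1$ that point is positive and the mass stays negative --- so $\nu_{t}$ would carry a negative atom at $1-t$ and the statement genuinely fails there. One must also treat $t=1$ on its own, as above, where the atom sits at $0$ and is simply killed, and record that $1-t$ never coincides with any of the points $(2k+1)(t-1)$.
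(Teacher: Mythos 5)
Your proposal is correct and follows essentially the same route as the paper: both represent $P^{\widetilde{\mathrm{D}}}_{n+1}(t)/t$ as the moments of $\frac{x}{t}\,d\bigl(\frac{1}{2}\mu^{\mathrm{B}}_{t}-\frac{1}{2}\delta_{1-t}\bigr)(x)$ and observe that this measure is nonnegative precisely when $t\ge1$. Your version simply spells out the explicit atom-by-atom computation and the total-mass check that the paper leaves implicit.
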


\begin{proof}
We have
\[
P^{\widetilde{\mathrm{D}}}_{n+1}(t)=\int x^n\cdot x\,d \left(\frac{1}{2}\mu^{\mathrm{B}}_{t}-\frac{1}{2}\delta_{1-t}\right)(x)
\]
and the measure
\[
\nu_{t}=\frac{x}{t}\,d \left(\frac{1}{2}\mu^{\mathrm{B}}_{t}-\frac{1}{2}\delta_{1-t}\right)(x)
\]
is positive for $t\ge1$.
\end{proof}


\begin{thebibliography}{6}

\bibitem{akhiezer}
N. I. Akhiezer,
\textit{The classical moment problem and some related questions in analysis,}
translated from the Russian by N. Kemmer, Hafner Publishing Co., New York 1965.

\bibitem{barry2011}
Paul Barry,
\textit{Eulerian polynomials as moments, via exponential Riordan arrays,} 
Journal of Integer Sequences, Vol. \textbf{14} (2011), Article 11.9.5.

\bibitem{barry2013}
Paul Barry,
\textit{General Eulerian polynomials as moments using exponential Riordan arrays,}
Journal of Integer Sequences, Vol. \textbf{16} (2013), Article 13.9.6.

\bibitem{brenti}
Francesco Brenti,
\textit{q-Eulerian polynomials arising from Coxeter groups}, European J. Combin. \textbf{15} (1994) 417--441.

\bibitem{chow}
Chak-On Chow,
\textit{On the Eulerian polynomials of type D}, European J. Combin. \textbf{24} (2003) 391--408.

\bibitem{chowgessel2007}
Chak-On Chow and I. M. Gessel,
\textit{On the descent numbers and major indices for the hyperoctahedral group,}
Adv. Appl. Math. \textbf{38}, No. 3 (2007) 275--301.

\bibitem{gkp}
R.~L.~Graham, D.~E.~Knuth and O.~Patashnik,
\textit{Concrete Mathematics. A Foundation for Computer Science,}
Addison-Wesley, New York, 1994.

\bibitem{hyatt} Matthew Hyatt,
\textit{Recurrences for Eulerian polynomials of type B and type D}, arXiv, 2015.

\bibitem{oeis} N. J. A. Sloane,
\textit{The On-line Encyclopedia of Integer Sequences}, http://oeis.org/.

\bibitem{stanley}
R. P. Stanley,
\textit{Enumerative Combinatorics,} vol. 1, Cambridge University Press,
Cambridge, New York, 1997.


\end{thebibliography}
\end{document}